\newtheorem{theorem}{Theorem}
\newtheorem{lemma}[theorem]{Lemma}
\newtheorem{remark}[theorem]{Remark}
\newtheorem{conjecture}[theorem]{Conjecture}
\theoremstyle{definition}
\newtheorem{definition}[theorem]{Definition}
\newcommand{\F}{{\mathcal{F}}}
\newcommand{\C}{\mathrm C}
\newcommand{\D}{\mathrm D}
\newcommand{\K}{\mathrm K}
\newcommand{\V}{\mathrm V}
\newcommand{\W}{\mathrm W}
\newcommand{\ZZ}{\mathbb Z}
\newcommand{\Aut}{\mathrm{Aut}}
\newcommand{\Alt}{\mathrm{Alt}}
\newcommand{\Sym}{\mathrm{Sym}}
\newcommand{\Cay}{\mathrm{Cay}}
\newcommand{\Cos}{\mathrm{Cos}}
\newcommand{\la}{\langle}
\newcommand{\ra}{\rangle}
\renewcommand{\wr}{\mathop{\rm wr}}
\begin{document}
\title{Tetravalent arc-transitive graphs with unbounded vertex-stabilisers}

\author[P. Poto\v{c}nik]{Primo\v{z} Poto\v{c}nik}
\address{Primo\v{z} Poto\v{c}nik,\newline
 Faculty of Mathematics and Physics,
 University of Ljubljana \newline 
Jadranska 19, 1000 Ljubljana, Slovenia}\email{primoz.potocnik@fmf.uni-lj.si}

\author[P. Spiga]{Pablo Spiga}
\address{Pablo Spiga,\newline
  School of Mathematics and Statistics,
The University of Western Australia,\newline
 Crawley, WA 6009, Australia} \email{spiga@maths.uwa.edu.au}

\author[G. Verret]{Gabriel Verret}
\address{Gabriel Verret,\newline
 Institute of Mathematics, Physics, and
  Mechanics, \newline 
Jadranska 19, 1000 Ljubljana, Slovenia}
\email{gabriel.verret@fmf.uni-lj.si}

\thanks{Address correspondence to P. Spiga; 
E-mail: spiga@maths.uwa.edu.au\\ 
The second author is supported by UWA as part of the
Australian Council Federation Fellowship Project FF0776186.}

\subjclass[2000]{20B25}
\keywords{valency $4$, arc-transitive} 

\begin{abstract}
It has long been known that there exist finite connected tetravalent arc-transitive graphs with arbitrarily large vertex-stabilisers.
However, beside a well known family of exceptional graphs, related to the lexicographic product of a cycle with an edgeless graph on two vertices, only a few such infinite families of graphs are known. In this paper, we present two more families of tetravalent arc-transitive graphs with large vertex-stabilisers, each significant for its own reason.
\end{abstract}

\maketitle

\section{Introduction}
\label{intro}
A celebrated theorem of Tutte \cite{Tutte,Tutte2} states that, in a finite connected cubic arc-transitive graph, a vertex-stabiliser has order at most $48$. As is well known and will be shown below, Tutte's result has no immediate generalization to graphs of valency $4$. However, it is still an interesting question whether there exists a relatively tame function $f$, such that, for every connected tetravalent arc-transitive graph with $n$ vertices, a vertex-stabiliser has order at most $f(n)$.

Here is a standard example showing that $f$ must grow at least exponentially with the number of vertices: for $r\ge 3$, let $\W_r$ denote the lexicographic product $\C_r[\bar{\K}_2]$ of a cycle of length $r$ with an edgeless graph on two vertices. The graph $\W_r$ has vertex set $\ZZ_r\times \ZZ_2$ with $(v,i)$ adjacent to  $(v\pm 1, j)$ for $v\in \ZZ_r$ and $i,j\in \ZZ_2$. This graph admits an arc-transitive action of the group $G \cong \C_2\wr \D_{r}$, with the base group $\C_2^r \le G$ preserving each \textit{fibre} $V_j = \{j\} \times \ZZ_2 \subseteq \ZZ_r\times \ZZ_2$ setwise and $\D_r$ acting naturally on the set of fibres $\{V_j\}_j$. The vertex-stabiliser $G_v$ is then isomorphic to the group $\C_2^{r-1}\rtimes \C_{2}$. In particular, the order of $G_v$ is $2^r$ and grows exponentially with the number of vertices $2r$ of the graph $W_r$. Further examples of families of tetravalent arc-transitive graphs exhibiting an exponential growth of $|G_v|$ were found by Praeger 
 and Xu in \cite{PraegerXu}. The graphs $\C(r,s)$, constituting these families,  will be described in Section~\ref{crs}.

On the other hand, it has been shown recently by the authors of this paper that a completely different picture emerges once the exceptional graphs $\C(r,s)$ are excluded~\cite[Corollary 3]{main}. Namely, if $\Gamma$ is a connected tetravalent $G$-arc-transitive graph not isomorphic to a graph $\C(r,s)$, then either $|G_v| \le 2^43^6$ or
\begin{equation}
\label{bound}
|\V\Gamma| \ge 2 |G_v| \log_2(|G_v|/2).
\end{equation}
Note that \eqref{bound} implies that $|G_v|$ is bounded above by a sub-linear function of $|\V\Gamma|$. The first aim of this paper is to construct a family of connected $G$-arc-transitive graphs, not isomorphic to $\C(r,s)$, attaining the bound given in \eqref{bound}. This will done in Section~\ref{Gamma} (see Definition ~\ref{defGamma}). Both the graphs $\C(r,s)$ and the graphs presented in Section~\ref{Gamma} have soluble groups of automorphisms. 

The only family of connected tetravalent $G$-arc-transitive graphs with arbitrarily large vertex-stabilisers and 
with $G$ non-soluble that was previously known to us is the family constructed by Conder and Walker in~\cite{CW}. For a member $\Gamma$ of this family, we have that $G\cong \Sym(n)$ (for some $n$) and  $|\V\Gamma| \ge (|G_v|-1)!$. In particular, $|G_v|$ grows slower than any logarithmic function of $|\V\Gamma|$.

The second family of graphs that we construct in this paper (see Definition~\ref{def:D}) consists of tetravalent $G$-arc-transitive graphs $\Delta_n$ with $G\cong \Sym(4n)$.
The vertex-stabiliser $G_v$ in this family has order $2^{2n}$, and hence $|\V\Delta_n| = (4n)! /2^{2n}$.
Using Stirling's formula, one can see that, asymptotically,  $|G_v|$ grows slower than $|\V\Gamma|^c$ for any $c>0$, but faster than any logarithmic function of $|\V\Gamma|$.
This is much slower than the growth from \eqref{bound} but still considerably faster than the growth exhibited by the family in~\cite{CW}.

\section{Preliminaries}

The following standard notation and terminology will be used throughout the paper. All the graphs will be finite, simple and connected.
Let $\Gamma$ be a graph and let $G\le \Aut(\Gamma)$. We say that $\Gamma$ is $G$-arc-transitive provided that $G$ acts transitively on the set of arcs of $\Gamma$. In this case, the permutation group $G_v^{\Gamma(v)}$ induced by the action of the stabiliser $G_v$ of a vertex $v\in \V\Gamma$ on the neighbourhood $\Gamma(v)$ is transitive. A pair $(\Gamma,G)$ is called {\em locally-$\D_4$} if $\Gamma$ is a connected tetravalent $G$-arc-transitive graph with $G_v^{\Gamma(v)}$ isomorphic to the dihedral group  $\D_4$ in its action on $4$ points. For a group $G$, a subgroup $H$ and an element $a\in G\setminus H$, the {\em coset graph} $\Cos(G,H,a)$ is the graph with vertex set the set of right cosets $G/H = \{Hg : g \in G\}$ and edge set $\{ \{Hg, Hag\} : g \in G\} $. It was proved by Sabidussi \cite{Sabidussi} that every $G$-vertex-transitive graph is isomorphic to some coset graph of $G$. More precisely, we have the following well known result.

\begin{lemma}
\label{lem:cos}
Let $\Gamma$ be a connected $G$-arc-transitive graph, let $G_v$ be the stabiliser of the vertex $v\in \V\Gamma$ and let  $a\in G$ be an automorphism with $v^a \in \Gamma(v)$.
Then $\Gamma \cong \Cos(G,G_v,a)$.

Conversely, 
let $H$ be a core-free subgroup of a finite group $G$ and let $a\in G$ be such that $G=\langle H,a\rangle$ and $a^{-1} \in HaH$.
Then the graph $\Gamma=\Cos(G,H,a)$ is connected and $G$-arc-transitive. The valency of $\Gamma$ is $|HaH|/|H|$ and the neighbourhood $\Gamma(H)$ of
the vertex $H\in \V\Gamma$ is the set $\Gamma(H)=\{Hah : h\in H\}$.
\end{lemma}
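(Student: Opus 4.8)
The plan is to treat the two implications separately, in each case reducing the graph-theoretic assertions --- adjacency, connectedness, arc-transitivity --- to statements about (double) cosets. The single fact that drives both halves is the following description of adjacency. In a connected $G$-arc-transitive graph, once a neighbour $v^a\in\Gamma(v)$ is fixed, transitivity of $G_v$ on $\Gamma(v)$ gives $\Gamma(v)=\{(v^a)^x : x\in G_v\}$, and since $v^k$ depends only on the coset $G_v k$ this rewrites as $\Gamma(v)=\{v^k : k\in G_vaG_v\}$. Applying a suitable automorphism then shows that $v^g$ and $v^h$ are adjacent if and only if $hg^{-1}\in G_vaG_v$. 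The identical computation in the coset graph shows that $Hg$ and $Hh$ are adjacent if and only if $hg^{-1}\in HaH$; here the hypothesis $a^{-1}\in HaH$ guarantees $Ha^{-1}H=HaH$, which is exactly what makes this relation symmetric, since a coset adjacent to $Hg$ can play either role in a defining pair $\{Hg',Hag'\}$, contributing $HaH$ in one case and $Ha^{-1}H$ in the other.

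For the forward implication I would introduce $\phi\colon G/G_v\to\V\Gamma$, $G_vg\mapsto v^g$. This is well defined, since $G_vg=G_vh$ gives $gh^{-1}\in G_v$ and hence $v^g=v^h$; it is a bijection by the orbit--stabiliser theorem (using vertex-transitivity); and it is $G$-equivariant for the right-multiplication action, as $\phi(G_vg\cdot x)=v^{gx}=(v^g)^x$. It then remains only to check that $\phi$ sends edges to edges and non-edges to non-edges, and this is immediate from the two adjacency characterisations above, which are word for word identical with $H=G_v$. The two standing hypotheses of the converse, namely $G=\langle G_v,a\rangle$ and $a^{-1}\in G_vaG_v$, are not needed here; they in fact hold automatically, by connectedness and by arc-reversibility respectively.

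For the converse I would first let $G$ act on $G/H$ by right multiplication. This action is transitive, it preserves the relation $hg^{-1}\in HaH$ (right multiplication leaves $hg^{-1}$ unchanged) and hence acts as graph automorphisms, and it is faithful precisely because $H$ is core-free, so that $G\le\Aut(\Gamma)$. Next I would compute the neighbourhood of the base vertex $H$: the cosets adjacent to $H$ are $\{Ht : t\in HaH\}=\{Hah : h\in H\}$, a set of cardinality $|HaH|/|H|$, giving the stated valency and neighbourhood. Crucially, this set is a single orbit of $G_H=H$ acting by right multiplication --- namely the orbit of $Ha$ --- so $G_H$ is transitive on $\Gamma(H)$, and together with vertex-transitivity this yields $G$-arc-transitivity. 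Finally, connectedness follows from $G=\langle H,a\rangle$: writing $T$ for the set of $g\in G$ such that $Hg$ lies in the connected component of $H$, one checks that $T$ is closed under left multiplication by $H$, by $a$ and by $a^{-1}$ (each such multiplication moves $Hg$ to an adjacent coset or fixes it), hence closed under left multiplication by $\langle H,a\rangle=G$; since $1\in T$ this forces $T=G$.

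I expect the only genuine subtlety --- and the place where the hypothesis $a^{-1}\in HaH$ must be spent --- to be the passage from $H$-orbits on $\Gamma(H)$ to honest arc-transitivity. Without that hypothesis the neighbourhood of $H$ would split as $\{Hah : h\in H\}\cup\{Ha^{-1}h : h\in H\}$, on which $H$ may have two orbits, leaving the graph merely vertex-transitive; it is exactly the equality $Ha^{-1}H=HaH$ that fuses these into the single orbit required. Everything else is routine bookkeeping with cosets, the one recurring point of care being to keep the conventions straight, with $G$ acting on the right while the stabiliser cosets are written on the left, so that the equivariance of $\phi$ and the double-coset adjacency condition come out correctly.
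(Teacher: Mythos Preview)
The paper does not prove this lemma; it is stated as a ``well known result'' (attributed to Sabidussi) and used without proof. Your argument is correct and is essentially the standard one: the Sabidussi bijection $G_vg\mapsto v^g$ for the forward direction, and for the converse the identification of adjacency with the double-coset condition $hg^{-1}\in HaH$, from which faithfulness, valency, the description of $\Gamma(H)$, arc-transitivity, and connectedness all follow as you indicate. Your remark that $a^{-1}\in HaH$ is precisely what collapses the two $H$-orbits $\{Hah\}$ and $\{Ha^{-1}h\}$ on $\Gamma(H)$ into one is exactly the point.
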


Given a connected graph $\Gamma$, a subgroup $G\le \Aut(\Gamma)$ and a normal subgroup $N\unlhd G$, the {\em normal quotient graph} $\Gamma/N$ is the graph with vertex set the orbit space $\V\Gamma/N = \{ v^N : v \in \V\Gamma\}$ and with two orbits $u^N$ and $v^N$ adjacent in $\Gamma/N$ whenever there exists a pair of vertices $u'$, $v'\in \V\Gamma$ with $u'\in u^N$ and $v'\in v^N$. Note that there exists a natural (but possibly not faithful) action of $G/N$ on $\Gamma/N$. Moreover, if $G$ is transitive on the vertices (arcs, respectively) of $\Gamma$, then $G/N$ is transitive on vertices (arcs, respectively) of $\Gamma/N$.

A special case of normal quotients arises when the quotient projection $\pi \colon \Gamma \to \Gamma/N$, $v \mapsto v^N$, is locally bijective (that is, $\pi$ maps the neighbourhood of an arbitrary vertex $v\in \V\Gamma$ bijectively onto the neighbourhood of $\pi(v)$ in $\Gamma/N$). It is well known that, in this case, $G/N$ acts faithfully on $\V(\Gamma/N)$, and that the vertex-stabilisers  $G_v$ and $(G/N)_{\pi(v)}$ are isomorphic and induce permutation isomorphic local groups $G_v^{\Gamma(v)}$ and $(G/N)_{\pi(v)}^{\Gamma/N(\pi(v))}$.
When the quotient projection $\pi \colon \Gamma \to \Gamma/N$ is locally bijective, we will say that the pair $(\Gamma,G)$ is an {\em $N$-cover} of the pair $(\Gamma/N,G/N)$. If, in addition, $N$ is contained in the centre of $G$, then we say that the pair $(\Gamma,G)$ is a {\em central $N$-cover} of the pair $(\Gamma/N,G/N)$.

We will need the following lemma describing the relationship between covers and coset graphs.

\begin{lemma}
\label{lem:CosCov}
Let $G$ be a group generated by a core-free subgroup $H$ and an element $a$. Further, let $\Gamma = \Cos(G,H,a)$ 
 and let $N$ be a normal subgroup of $G$ not containing $a$ and intersecting the set $H^aH$ trivially. Let $\bar{G} = G/N$, let $\bar{H} = HN/N$, and let $\bar{a} = Na \in G/N$. Then $\Gamma/N \cong \Cos(\bar{G},\bar{H},\bar{a})$ and $(\Gamma,G)$ is an $N$-cover of $(\Gamma/N,\bar{G})$.
\end{lemma}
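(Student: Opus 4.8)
The plan is to work throughout with the right-multiplication action of $G$ on $\V\Gamma = \{Hg : g\in G\}$, for which $G_H = H$. Because $N$ is normal, the $N$-orbit of a vertex $Hg$ is the right coset $(HN)g$, so $\V(\Gamma/N)$ is naturally the set of right cosets of $HN$ in $G$, which the correspondence theorem identifies with $\bar G/\bar H$. With this identification fixed, I would prove both assertions in one stroke: first establish that the projection $\pi$ is locally bijective (this is exactly the content of ``$N$-cover''), and then read off the isomorphism $\Gamma/N\cong\Cos(\bar G,\bar H,\bar a)$ from the forward direction of Lemma~\ref{lem:cos}.

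By vertex-transitivity of $G$ and normality of $N$ it suffices to check local bijectivity at $v=H$. Since $\pi$ always maps $\Gamma(v)$ onto $\Gamma/N(\pi(v))$, only injectivity (and non-collapse of the edge at $v$) needs proof. Using $\Gamma(H)=\{Hah : h\in H\}$, the projected neighbour $Hah$ is the coset $(HN)ah$, so two neighbours $Hah_1,Hah_2$ are fused precisely when $aka^{-1}\in HN$ with $k=h_1h_2^{-1}\in H$; writing $aka^{-1}=hn$ and conjugating by $a^{-1}$ gives $a^{-1}na=(a^{-1}h^{-1}a)k\in N\cap H^aH=\{1\}$, forcing $n=1$ and hence $Hah_1=Hah_2$. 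This computation, converting the hypothesis $N\cap H^aH=\{1\}$ into local injectivity, is the heart of the argument and the step I expect to be the main obstacle. It remains to rule out the edge at $v$ collapsing, i.e.\ to show $a\notin HN$: if $a=hn_0\in HN$ then $H^a=n_0^{-1}Hn_0$, and for every $h_0\in H$ the element $n_0^{-1}h_0n_0h_0^{-1}$ lies in $N\cap H^aH=\{1\}$, so $n_0$ centralises $H$; then $G=\langle H,a\rangle=\langle H,n_0\rangle$ normalises $H$, and core-freeness forces $H=1$, whence $a\in N$, contrary to hypothesis.

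Having shown $\pi$ locally bijective, I conclude that $(\Gamma,G)$ is an $N$-cover of $(\Gamma/N,\bar G)$; in particular $\bar G$ acts faithfully and arc-transitively on the connected graph $\Gamma/N$, and the stabiliser of $\pi(H)$ in $\bar G$ is $\bar H$. Finally, since $\pi(H)^{\bar a}=\pi(Ha)$ is a neighbour of $\pi(H)$ (distinct from it, as $a\notin HN$), the forward direction of Lemma~\ref{lem:cos}, applied to the arc-transitive pair $(\Gamma/N,\bar G)$ with the automorphism $\bar a$, yields $\Gamma/N\cong\Cos(\bar G,(\bar G)_{\pi(H)},\bar a)=\Cos(\bar G,\bar H,\bar a)$, completing the proof. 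I expect the only delicate points beyond the fusion computation to be the bookkeeping of the coset identifications; everything else is assembled from Lemma~\ref{lem:cos} and the standard facts about normal quotients recalled above.
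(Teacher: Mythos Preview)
Your proof is correct and follows the same overall strategy as the paper: establish that the quotient projection $\pi$ is locally bijective, and then invoke the forward direction of Lemma~\ref{lem:cos} to identify $\Gamma/N$ with $\Cos(\bar G,\bar H,\bar a)$. Your fusion computation is a mild repackaging of the paper's: where you compare two neighbours $Hah_1$, $Hah_2$ and land in $N\cap H^aH$, the paper instead shows $(v^a)^N\cap \Gamma(v)=\{v^a\}$ by taking $v^{az}=v^{ah}$ and deducing $z\in N\cap H^aH$; these are the same argument in different coordinates.

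One point worth noting: your treatment is actually more complete than the paper's. You explicitly verify that the edge at $v$ does not collapse, i.e.\ that $a\notin HN$, and you derive this from the hypotheses $a\notin N$, $N\cap H^aH=\{1\}$, and core-freeness of $H$ via the commutator argument forcing $n_0$ to centralise $H$. The paper's proof checks only that distinct neighbours of $v$ lie in distinct $N$-orbits, tacitly assuming that no neighbour lies in the $N$-orbit of $v$ itself; your extra paragraph closes that gap.
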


\begin{proof}
Let $v$ denote the vertex of $\Gamma=\Cos(G,H,a)$ corresponding to the coset $H\in G/H$. Then $H=G_v$ and $H^a = G_{v^a}$. As $\Gamma$ is $G$-arc-transitive, to show that the quotient projection $\pi\colon\Gamma \to \Gamma/N$ is locally bijective, it suffices to show that the $N$-orbit of $v^a$ intersects the neighbourhood of $v$ only in $v^a$, that is, $(v^a)^N \cap (v^a)^H = \{v^a\}$.
Now, if $u\in (v^a)^N \cap (v^a)^H$, then $u=v^{az} = v^{ah}$ for some $z\in N$ and $h\in H$. Therefore $azh^{-1}a^{-1} \in G_v = H$, and thus $z\in H^aH$. Since $N\cap H^aH  = 1$, this implies that $z=1$ and that $u=v^a =v^{ah}$, and hence $(v^a)^N \cap (v^a)^H = \{v^a\}$. This shows that $\pi \colon \Gamma \to \Gamma/N$ is indeed a covering projection and hence $(\Gamma,G)$ is an $N$-cover of $(\Gamma/N,G/N)$. It follows that $G/N$ acts faithfully and arc-transitively on $\Gamma/N$. The stabiliser of the vertex $v^N$ in $G/N$ is the group $\bar{H} = HN/N \cong H/(H\cap N) \cong H$, and $\bar{a}$ maps the vertex $\pi(v)$ to the neighbour  $\pi(v^a)$. By Lemma~\ref{lem:cos} we may thus conclude that $\Gamma/N \cong \Cos(\bar{G},\bar{H},\bar{a})$, as claimed.
\end{proof}

\section{The family of graphs with exponential growth of the vertex-stabiliser}
\label{crs}

In this section we describe the family of graphs $\C(r,s)$ mentioned in Section~\ref{intro}, which generalise the graphs $\W_r$.  We give a definition which is slightly different, but equivalent to the definition used in~\cite{PraegerXu}, where they were first introduced.

Let $\C(r,1) = \W_r$. Let $s$ be an integer satisfying $2\le s \le r-2$ and let $\C(r,s)$ be the graph with vertices being the $(s-1)$-paths of $\C(r,1)$ containing at most one vertex from each fibre $V_j$, $j\in\ZZ_r$, and with two such $(s-1)$-paths being adjacent in $\C(r,s)$ if and only if their intersection is an $(s-2)$-path in $\C(r,1)$.
The number of vertices of $\C(r,s)$ is clearly
\begin{equation}
|\V\C(r,s)| =r 2^s.
\end{equation}

It is easy to see that the girth of $\C(r,s)$ is $4$. Further, $\C(r,s)$ is bipartite provided that $r$ is even.

For $i\in \ZZ_r$, let $x_i$ denote the automorphism of $\W_r$ which interchanges the two vertices in the fibre $V_i$ and fixes all other vertices. Further, let $a$ be the automorphism of $\W_r$ which maps each $(v,i)\in\V\W_r$ to $(v+1,i)$, and let $b$ be the automorphism acting on the vertices of $\W_r$ according to the rule $(v,i)^b$ = $(-v,i)$ for every $v\in \ZZ_r$ and $i\in \ZZ_2$. Then the group
\begin{equation}
G_r = \langle x_0, \ldots, x_{r-1} \rangle \rtimes \langle a, b\rangle \cong \C_2^r \rtimes \D_r
\end{equation}
acts arc-transitively on $\W_r$. It was shown in \cite{PraegerXu} that, if $r\not = 4$, then $\Aut(\W_r) = G_r$  ($\W_4\cong K_{4,4}$ and hence $\Aut(\W_4) \cong \Sym(4) \wr \Sym(2)$).

Since $G_r$ permutes the $(s-1)$-paths of $W_r$ containing at most one vertex from each fibre $V_j$, the group $G_r$ acts as a group of automorphisms of $\C(r,s)$.
The stabiliser in $G=G_r$ of the vertex $v$ of $\C(r,s)$ corresponding to the 
$(s-1)$-path $(r-s,0)(r-s+1,0)\ldots(r-1,0)$ in $\W_r$
is the group $H = \langle x_0, x_1, \ldots, x_{r-s-1}, b_s\rangle$, where $b_s$ is the element of $G$ acting as $(v,i)^{b_s} = (r-s-1-v,i)$.
Note that $G = \langle H , a \rangle$. Using Lemma~\ref{lem:cos}, it is now easy to see that the graphs $\C(r,s)$ can be defined in terms of coset graphs as follows.

\begin{lemma}
\label{lem:CrsCos}
The graph $\C(r,s)$ is isomorphic to the coset graph $\Cos(G_{r,s},H_{r,s},a)$ where
\begin{eqnarray*}
G_{r,s} & = & \langle x_0, \ldots, x_{r-1},a,b \mid x_0^2= \cdots = x_{r-1}^2 =a^r = b^2 = (ab)^2=1,\\ & & \phantom{\langle x_0, \ldots, x_{r-1},a,b \mid i} x_i^a =x_{i+1},\> x_i^b = x_{r-s-1-i} \rangle, \\
H_{r,s} & = & \langle x_0, \ldots, x_{r-1}, b \rangle \le G_{r,s}.
\end{eqnarray*}  
\end{lemma}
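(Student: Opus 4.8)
The plan is to obtain the isomorphism from the forward direction of Lemma~\ref{lem:cos} applied to the concrete group $G_r$, and then to transport the resulting coset graph across an explicit isomorphism $G_{r,s}\cong G_r$. Write $v$ for the vertex of $\C(r,s)$ corresponding to the path $(r-s,0)(r-s+1,0)\cdots(r-1,0)$. As recorded above, the stabiliser is $(G_r)_v=H=\langle x_0,\ldots,x_{r-s-1},b_s\rangle$ and $G_r=\langle H,a\rangle$. Since $a$ maps $v$ to the path on the fibres $V_{r-s+1},\ldots,V_{r-1},V_0$, the two paths share the $(s-2)$-path on $V_{r-s+1},\ldots,V_{r-1}$ and are therefore adjacent in $\C(r,s)$, so that $v^a$ is a neighbour of $v$. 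Lemma~\ref{lem:cos} then gives $\C(r,s)\cong\Cos(G_r,H,a)$, and it remains to identify the triple $(G_r,H,a)$ with $(G_{r,s},H_{r,s},a)$.

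First I would define a homomorphism $\phi\colon G_{r,s}\to G_r$ using the universal property of the presentation, sending $x_i\mapsto x_i$, $a\mapsto a$, and, crucially, the abstract generator $b\mapsto b_s=ba^{r-s-1}$, the reflection $(v,i)\mapsto(r-s-1-v,i)$. The role of $b_s$ is that $x_i^{b_s}=x_{r-s-1-i}$ in $G_r$, matching the relation $x_i^b=x_{r-s-1-i}$; the relations $x_i^2=a^r=1$ and $x_i^a=x_{i+1}$ are immediate, while $b_s^2=1$ and $(ab_s)^2=1$ hold because $a$ and $b_s$ generate $\D_r$ with $ab_s$ a reflection. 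As $x_0,\ldots,x_{r-1}$ together with $a$ and $b_s$ generate $\C_2^r\rtimes\D_r=G_r$, the map $\phi$ is surjective.

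To upgrade $\phi$ to an isomorphism I would bound $|G_{r,s}|$ above by $|G_r|=2^{r+1}r$. The relations $x_i^a=x_{i+1}$ show that $N:=\langle x_0,\ldots,x_{r-1}\rangle$ is the normal closure of $x_0$ under $\langle a\rangle$, and $x_i^b=x_{r-s-1-i}\in N$ shows $N\unlhd G_{r,s}$; the quotient $G_{r,s}/N$ is generated by the images of $a$ and $b$ subject to $a^r=b^2=(ab)^2=1$, so it is a quotient of $\D_r$ of order at most $2r$. The main obstacle is the complementary bound $|N|\le 2^r$: here one must extract from the defining relations that the involutions $x_0,\ldots,x_{r-1}$ pairwise commute, so that $N$ is elementary abelian of rank at most $r$. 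Granting this, $|G_{r,s}|\le 2^r\cdot 2r=|G_r|$, which together with surjectivity forces $\phi$ to be an isomorphism.

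Finally I would check that $\phi$ carries the distinguished data correctly: $\phi(a)=a$ and, with $b\mapsto b_s$, the map $\phi$ restricts to an isomorphism from $H_{r,s}$ onto the vertex-stabiliser $H$. An isomorphism of triples $(G_{r,s},H_{r,s},a)\cong(G_r,H,a)$ induces an isomorphism of the associated coset graphs, and combining this with $\C(r,s)\cong\Cos(G_r,H,a)$ from the first step yields $\C(r,s)\cong\Cos(G_{r,s},H_{r,s},a)$, as required.
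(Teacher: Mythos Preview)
Your approach is the paper's: apply Lemma~\ref{lem:cos} to the concrete action of $G_r$ on $\C(r,s)$, using the stabiliser $H=\langle x_0,\ldots,x_{r-s-1},b_s\rangle$ already computed, and then match this with the abstract presentation. The paper merely says this is ``easy to see''; you supply the details, including the verification that $v^a\in\Gamma(v)$ and the explicit isomorphism $G_{r,s}\to G_r$ via $b\mapsto b_s$.

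You are right to single out the commutativity of the $x_i$ as the crux, and in fact your caution is well placed: the relations $[x_i,x_j]=1$ are \emph{not} derivable from the presentation of $G_{r,s}$ as printed. The free product $\C_2*\cdots*\C_2$ of $r$ copies of $\C_2$, extended by $\D_r$ permuting the free factors, satisfies every relation listed yet is infinite, so your ``granting this'' step cannot be completed from the stated relations alone. This is a lacuna in the lemma's statement rather than in your argument --- the authors plainly intend the commutation relations (cf.\ the use of $G_{2t,t}$ in Theorem~\ref{lemma:construction}(ii), where $E_t/\langle z\rangle$ is elementary abelian) --- and once $[x_i,x_j]=1$ is added your order bound $|N|\le 2^r$ is immediate. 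Similarly, the displayed $H_{r,s}=\langle x_0,\ldots,x_{r-1},b\rangle$ is evidently a misprint for $\langle x_0,\ldots,x_{r-s-1},b\rangle$ (otherwise the index is $r$, not $r2^s$); your final identification $\phi(H_{r,s})=H$ uses the corrected version.
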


Let us finish this section by reporting the following result from \cite{PraegerXu} regarding the automorphism group of $\C(r,s)$.

\begin{lemma}
{\rm \cite[Lemma 2.12]{PraegerXu}}
\label{lem:AutCrs}
Let $\Gamma=\C(r,s)$ with $1\le s \le r-1$. If $r\not =4$, then $\Aut(\Gamma) = G_{r,s}$. Moreover, $\Aut(\C(4,1)) \cong \Aut(\hbox{K}_{4,4}) \cong \Sym(4) \wr \Sym(2)$, 
 $\Aut(\C(4,2)) \cong \Sym(2) \wr \Sym(4)$ and $\Aut(\C(4,3)) \cong (\Sym(2) \wr \D_4).\Sym(2)$.
\end{lemma}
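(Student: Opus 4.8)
The plan is to prove that the containment $G_{r,s}\le\Aut(\C(r,s))$, valid by construction, is in fact an equality whenever $r\neq4$, where $G_{r,s}$ is the group of Lemma~\ref{lem:CrsCos}. Recall from the construction that $G_{r,s}$ contains the abelian normal subgroup $M=\la x_0,\dots,x_{r-1}\ra\cong\C_2^r$ with $G_{r,s}/M\cong\D_r$. Writing $A=\Aut(\C(r,s))$, the backbone of the argument is the orbit partition $\mathcal B=\{B_0,\dots,B_{r-1}\}$ of $M$ on $\V\C(r,s)$: each block $B_v$ consists of the $2^s$ vertices (that is, $(s-1)$-paths of $\W_r$) with a fixed projection onto $s$ consecutive positions of the cycle, consecutive blocks $B_v,B_{v+1}$ induce a disjoint union of $2^{s-1}$ four-cycles, and the quotient $\C(r,s)/M$ is the cycle $\C_r$. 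The strategy is then: (i) prove that $\mathcal B$ is $A$-invariant; (ii) deduce a homomorphism $A\to\Aut(\C_r)=\D_r$ and identify its kernel $K$ (the subgroup fixing every block setwise) with $M$; (iii) conclude $A=M\rtimes\D_r=G_{r,s}$; and (iv) treat $r=4$ by hand.

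Step (i) is the heart of the matter and the step I expect to be the main obstacle, everything else being essentially bookkeeping. For $s=1$ the blocks admit a clean intrinsic description: $\C(r,1)=\W_r=\C_r[\bar\K_2]$, the two vertices of a fibre have the same neighbours, and $\mathcal B$ is precisely the partition of $\V\W_r$ into the classes of the equivalence relation identifying two vertices when they have equal neighbourhoods, a relation manifestly preserved by $A$. This characterisation is valid exactly when no larger such classes occur, that is, when $\W_r$ is not complete bipartite; since $\W_r\cong\K_{4,4}$ precisely when $r=4$, this is where the hypothesis $r\neq4$ enters. For $s\ge2$ the vertices of a common block are no longer mutually equivalent in this sense, and one must instead recover $\mathcal B$ from the finer local structure: I would characterise the blocks through the four-cycles of $\C(r,s)$ and the intersection pattern of second neighbourhoods, showing that for $r\neq4$ the partition $\mathcal B$ is the unique $A$-invariant partition with the displayed four-cycle coupling between consecutive classes; equivalently, that $M$ is normal in $A$. (One could also attempt an induction on $s$ via a suitable $A$-equivariant projection onto $\C(r,s-1)$, reducing to the base case $s=1$.) Isolating exactly the value $r=4$ as exceptional, uniformly in $s$, is the delicate point.

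Granting (i), step (ii) is routine. The induced action of $A$ on $\mathcal B$ gives automorphisms of the quotient cycle $\C_r$, hence a homomorphism $A\to\D_r$ whose image contains the image of $G_{r,s}$ and therefore equals $\D_r$. For the kernel, an element of $K$ fixes each $B_v$ setwise and must respect, for every $v$, the $2^{s-1}$ four-cycles joining $B_v$ to $B_{v+1}$; the rigidity of this coupling between consecutive blocks forces $K$ to act on $\V\C(r,s)$ exactly as the coordinate-flips generating $M$, so $K=M$ and $|K|=2^r$. Combining, $|A|=|K|\,|A/K|=2^r\cdot 2r=|G_{r,s}|$, and since $G_{r,s}\le A$ we conclude $A=G_{r,s}$, proving the main assertion for $r\neq4$.

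Finally, for $r=4$ the partition $\mathcal B$ is no longer canonical and genuinely extra symmetries appear, so the three graphs must be handled individually. One identifies $\C(4,1)\cong\K_{4,4}$ directly, yielding $\Aut\cong\Sym(4)\wr\Sym(2)$; the graphs $\C(4,2)$ and $\C(4,3)$ are small enough that their automorphism groups $\Sym(2)\wr\Sym(4)$ and $(\Sym(2)\wr\D_4).\Sym(2)$ can be computed explicitly, by exhibiting the extra automorphisms together with a description of each graph as a standard small graph, and one checks directly that these additional automorphisms do not preserve $\mathcal B$. The conceptual reason these cases are exceptional is exactly the one already visible at $s=1$: the collapse $\W_4\cong\K_{4,4}$ creates oversized classes of vertices with equal neighbourhoods, which is the sole mechanism by which the characterisation of the blocks in step (i) can fail.
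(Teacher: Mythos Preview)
The paper does not prove this lemma at all: it is simply quoted from \cite{PraegerXu} (Lemma~2.12 there), introduced with ``Let us finish this section by reporting the following result from \cite{PraegerXu}''. There is therefore no proof in the paper to compare your proposal against.

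As for your proposal itself, it is a plausible outline of how the Praeger--Xu argument might go, but it is not a proof. You explicitly flag step~(i) as ``the heart of the matter'' and then do not carry it out: for $s\ge 2$ you only say that you \emph{would} characterise the blocks via four-cycles and second neighbourhoods, or alternatively attempt an induction on $s$, without executing either. Likewise in step~(ii), the claim that the rigidity of the four-cycle coupling forces $K=M$ is asserted rather than argued; one must actually show that no nontrivial element of $K$ can fix every block setwise while lying outside $M$, and this requires a concrete computation (for instance, that an element of $K$ restricted to a block is determined by its action on an adjacent block up to a single $\C_2$). Your treatment of $r=4$ is also just a pointer to a case analysis. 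So what you have written is a correct strategic summary, not a proof, and in any case the paper under review defers the entire statement to the cited reference.
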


\section{The graphs attaining the bound \eqref{bound}} 
\label{Gamma}

In this section, for every $t\geq 2$, we construct two locally-$\D_4$ pairs $(\Gamma_t^{+},G_t^+)$ and $(\Gamma_t^{-},G_t^-)$ with $|\V\Gamma_t^\pm| = t2^{t+2}$ and $|G_t^\pm| = t2^{2t+3}$.
Since $|(G_t^\pm)_v| = |G_t^\pm| /  |\V\Gamma_t^\pm|= 2^{t+1}$, we see that the pairs $(\Gamma_t^{\pm},G_t^\pm)$ indeed meet the bound \eqref{bound} stated in Section~\ref{intro}.

Let $t$ be an integer satisfying $t\ge 2$. We start by considering the {\em extraspecial group} $E_t$ of order $2^{2t+1}$ of {\em plus type}, which has the following presentation:
\begin{eqnarray}\label{eq:E}\nonumber
E_t=\langle x_0,\ldots,x_{2t-1},z&\mid&
x_i^2=z^2=[x_i,z]=1 \textrm{ for }0\leq i\leq 2t-1,\\
&&[x_i,x_j]=1 \textrm{ for }|i-j|\neq t,\\\nonumber
&&[x_i,x_{t+i}]=z \textrm{ for  }0\leq i\leq t-1\rangle.
\end{eqnarray}

We will now extend the group $E_t$ by the dihedral group
\begin{eqnarray}\label{eq:D}
\D_{2t}=\langle a,b\mid a^{2t}=b^2=1, a^b=a^{-1}\rangle,
\end{eqnarray}
using two different 2-cocycles.
In both extensions, the generators $a$ and $b$ will act upon the generators of $E_t$ according to the rules:
$$
 x_i^a=x_{i+1} \hbox{ and } x_i^b=x_{t-1-i}\> \hbox{  for } 0\leq i\leq 2t-1\> \hbox{ (with indices taken mod } 2t).
$$
To obtain the split extension $G_t^+$, we let $a^{2t} = b^2 =1$, and thus define:
\begin{equation}
\label{G+}
G_t^+ = E_t \rtimes \D_{2t}, \quad  a^{2t}=b^2= 1,\> x_i^a=x_{i+1},\> x_i^b=x_{t-1-i}.
\end{equation}
The second extension $G_t^-$ is non-split, we have $a^{2t}=z$ and $b^2 =1$:
\begin{equation}
\label{G-}
G_t^- = E_t.\D_{2t}, \quad a^{2t}=z,\> b^2 =1,\> x_i^a=x_{i+1},\> x_i^b=x_{t-1-i}.
\end{equation}
Finally, let 
\begin{equation}
\label{H}
H_t^\pm = \langle x_0,\ldots,x_{t-1}, b\rangle \le G_t^\pm,
\end{equation}
and observe that $H_t^+ \cong H_t^- \cong \C_2^t \rtimes \C_2$.
The graphs $\Gamma_t^+$ and $\Gamma_t^-$ are now defined as coset graphs on the groups $G_t^+$ and $G_t^-$, respectively:
\begin{definition}
\label{defGamma}
Let $t\geq 2$ and let $a$, $H_t^{\pm}$ and $G_t^{\pm}$ be as above. Then $\Gamma_t^+ = \Cos(G_t^+,H_t^+,a)$ and $\Gamma_t^- = \Cos(G_t^{-},H_t^-,a)$.
\end{definition}

Before stating the main theorem of this section, let us first show that for any $t\ge 2$, a triple $(G,H,a) = (G_t^\pm,H_t^\pm,a)$ 
satisfies the conditions of Lemma~\ref{lem:cos} and thus gives rise to a connected $G$-arc-transitive graph $\Gamma_t^\pm$.
Observe first that since $a$ cyclically permutes the elements of the generating set $\{x_0, \ldots,x_{2t-1}\}$ of $E$, and since $x_0\in H$,
the group $\langle H, a\rangle$ contains the subgroup $E$. Since $b\in H$, we see that $\langle H,a \rangle = \langle H, b, a\rangle = \langle E,b,a\rangle = G$. 
The graph $\Gamma_t^\pm$ is therefore connected.

To see that $H$ is core-free in $G$ observe that 
$$
 H \cap H^{a^t}=\langle x_0,\ldots,x_{t-1}, b \rangle \cap \langle x_t,\ldots,x_{2t-1}, b\rangle = \langle b \rangle.
$$
Since $b\notin H^a$, we see that
$H\cap H^{a^t}\cap H^a=1$,  implying that the core of $H$ in $G$ is trivial.
 Finally,  since $b\in H$, it follows that $HaH = HbabH = H a^{-1} H$, and hence $a^{-1} \in H a H$.
 In view of Lemma~\ref{lem:cos}, this implies that the graph $\Gamma_t^\pm$ is indeed connected and $G$-arc-transitive.

\begin{theorem}
\label{lemma:construction}
Let $t$ be an integer with $t\ge 2$ and let $(\Gamma,G)$ be either $(\Gamma_t^+,G_t^+)$ or $(\Gamma_t^-,G_t^-)$. 
Then the following statements hold:
\begin{itemize}
 \item[{\rm (i)}]
 $|\V\Gamma| = t2^{t+2}$ and $|G_v| = 2^{t+1}$, and hence the bound \eqref{bound} in Section~\ref{intro} is met;
\item[{\rm (ii)}]
 $(\Gamma,G)$ is a central $\C_2$-cover of the pair $(\C(2t,t),G_{2t,t})$, where $G_{2t,t}$ is as in Lemma~\ref{lem:CrsCos};
\item[{\rm (iii)}]
  $(\Gamma,G)$ is a locally-$\D_4$ pair;
 \item[{\rm (iv)}]
 the girth of $\Gamma$ is $4$ if $\Gamma=\Gamma_2^+$, is $6$ if $\Gamma=\Gamma_3^+$, and is $8$ otherwise;
 \item[{\rm (v)}]
 if $(\Gamma,G) \not = (\Gamma_2^-, G_2^-)$, then $\Aut(\Gamma) = G$, while $|\Aut(\Gamma_2^-):G_2^-|=9$.
\end{itemize}
\end{theorem}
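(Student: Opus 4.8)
The plan is to prove the five parts in turn, using the central cover of part~(ii) to obtain parts~(iii) and~(iv), and the known automorphism group of $\C(2t,t)$ to obtain part~(v). For~(i), since $H_t^{\pm}\cong\C_2^t\rtimes\C_2$ we have $|G_v|=|H_t^{\pm}|=2^{t+1}$, and from $|E_t|=2^{2t+1}$ and $|\D_{2t}|=4t$ we get $|G_t^{\pm}|=t2^{2t+3}$, so $|\V\Gamma|=|G_t^{\pm}|/|H_t^{\pm}|=t2^{t+2}$; substituting $|G_v|=2^{t+1}$ in \eqref{bound} gives $2\cdot 2^{t+1}\cdot t=t2^{t+2}=|\V\Gamma|$, so the bound is met with equality. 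For~(ii) I would first check that $z$ is central in $G_t^{\pm}$ by computing $z^a=[x_1,x_{t+1}]=z$ and $z^b=[x_{t-1},x_{2t-1}]=z$ from the defining relations, and set $N=\la z\ra$. To apply Lemma~\ref{lem:CosCov} it remains to verify $a\notin N$ (clear) and $N\cap H^aH=\{1\}$, that is, $z\notin H^aH$. The crucial point is that $H\cap E_t=\la x_0,\ldots,x_{t-1}\ra$ and $(H\cap E_t)^a=\la x_1,\ldots,x_t\ra$ are elementary abelian and that, among all these generators, only $x_0$ and $x_t$ fail to commute; a short normal-form computation in $E_t.\D_{2t}$ (handling also the cosets involving $b$) then shows that $z$ cannot arise as such a product. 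Lemma~\ref{lem:CosCov} then makes $(\Gamma,G)$ an $N$-cover of $(\Gamma/N,\bar G)$, and matching presentations shows $\bar G=G/N\cong G_{2t,t}$ and $\bar H\cong H_{2t,t}$ (modulo $z$ the group $E_t$ becomes $\C_2^{2t}$ and $a^{2t}$ becomes trivial in both extensions), so $\Gamma/N\cong\C(2t,t)$ by Lemma~\ref{lem:CrsCos}; as $N\le Z(G)$ the cover is central.

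For~(iii) I would compute the local action directly: the four neighbours of $v=H$ correspond to sliding the defining $(t-1)$-path one fibre left or right with two layer-choices on each side, so $b$ interchanges the two sides, $x_0$ and $x_{t-1}$ each swap one side internally, while $x_1,\ldots,x_{t-2}$ fix all four neighbours and form the kernel of the action on $\Gamma(v)$. Hence $G_v^{\Gamma(v)}$ has order $8$ and is transitive on $4$ points, so it is $\D_4$. For~(iv) I would use that, the cover being central, a cycle of $\C(2t,t)$ lifts to a cycle of $\Gamma$ precisely when its holonomy in $N$ is trivial, so that the girth of $\Gamma$ is the length of the shortest cycle of $\C(2t,t)$ of trivial holonomy. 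After enumerating the short cycles up to the symmetry of $G_{2t,t}$, I would compute holonomies from the two sources of central elements, the commutators $[x_i,x_{i+t}]=z$ and the relation $a^{2t}=z$ (non-split) or $a^{2t}=1$ (split): every $4$-cycle is either a toggle square (toggling two fibres at distance $t$, holonomy $z$) or, only when $2t=4$, the square winding once around the base cycle (holonomy $a^{2t}$), so a $4$-cycle lifts exactly for $(\Gamma_2^+,G_2^+)$; for $t=3$ the hexagon around the base cycle has holonomy $a^{6}=a^{2t}$, trivial only in the split case, while all other $6$-cycles again collect a factor $z$, giving girth $6$ for $\Gamma_3^+$ and no $6$-cycle otherwise; in all remaining cases I would exhibit an explicit $8$-cycle of trivial holonomy (for instance one making two toggle excursions, whose holonomy is $z^2=1$) and check that no shorter cycle lifts, giving girth $8$.

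For~(v) let $A=\Aut(\Gamma)\ge G$. When $t\ge 3$ we have $\Aut(\C(2t,t))=G_{2t,t}$ by Lemma~\ref{lem:AutCrs}, so it suffices to show that the deck group $N=\la z\ra$ is normal in $A$: then $A/N\le\Aut(\Gamma/N)=\Aut(\C(2t,t))=\bar G=G/N$, forcing $A=G$. I would derive this from the fact that $A_v$ is a $\{2,3\}$-group together with the rigidity of the local $\D_4$-structure, which forces $A_v=G_v$ and hence $A=G$ outright. The remaining case $t=2$ is genuinely exceptional because $\C(4,2)$ has the larger automorphism group $\Sym(2)\wr\Sym(4)$: here I would determine which of its extra automorphisms lift through each cover, showing that none lift for $\Gamma_2^+$, so $\Aut(\Gamma_2^+)=G_2^+$, whereas for $\Gamma_2^-$ the group $N$ fails to be normal in $\Aut(\Gamma_2^-)$ and a direct analysis of this $32$-vertex graph yields the stated index $9$.

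The main obstacle is part~(v): establishing $A_v=G_v$ in the generic case requires genuinely controlling the growth of the vertex-stabiliser under the full automorphism group, and the exceptional graph $\Gamma_2^-$, where $N$ is not normal in $\Aut(\Gamma_2^-)$, almost certainly needs a bespoke (very likely computer-assisted) computation to pin down the index $9$. The holonomy bookkeeping in~(iv) — in particular verifying that the toggle squares and base-cycle loops really account for all short cycles of $\C(2t,t)$ — is the other place demanding care.
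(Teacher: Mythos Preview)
Parts (i)--(iii) are fine and essentially match the paper (the paper obtains (iii) in one line from (ii), since a locally bijective cover inherits the local group, but your direct computation works too).

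For (iv) your holonomy approach is viable but genuinely different from the paper's. The paper does not classify the short cycles of $\C(2t,t)$; instead it explicitly lists coset representatives for the second and third spheres $\Gamma_2(H)$ and $\Gamma_3(H)$ in $\Gamma$ itself (sets $X_2$, $X_3$ of $12$ and $36$ elements, respectively) and reads the girth off from coincidences among these representatives in the various cases. Your method is more conceptual, but it obliges you to prove that the ``toggle squares'' and the base-cycle loop really exhaust the $4$- and $6$-cycles of $\C(2t,t)$, which is at least as much work. More importantly, the paper's sphere computation is not just for (iv): it is the engine that drives (v).

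Part (v) is where your plan has a real gap. Saying that ``$A_v$ is a $\{2,3\}$-group together with the rigidity of the local $\D_4$-structure forces $A_v=G_v$'' is not an argument: locally-$\D_4$ pairs can have arbitrarily large $2$-group vertex-stabilisers (that is the whole point of the $\C(r,s)$ family), so no abstract bound on $|A_v|$ drops out of the local action alone. You correctly identify that the key is to show $N=\la z\ra$ is normal in $A=\Aut(\Gamma)$, but you give no mechanism for this. The paper's mechanism is combinatorial and uses the explicit description of $\Gamma_3(H)$ obtained for (iv): one sets
\[
\mathcal{B}=\{H\}\cup\bigcap_{u\in\Gamma(H)}\Gamma_3(u),
\]
which is visibly $A_H$-invariant, computes that $\mathcal{B}=\{H,\,Hz,\,Hx_tx_{2t-1},\,Hx_tx_{2t-1}z\}$ (for $t\ge 4$), and observes that $\la x_tx_{2t-1},z\ra$ acts transitively on $\mathcal{B}$, so $\mathcal{B}$ is a block for $A$. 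Then one checks that $H$ and $Hz$ each have a neighbour in each of the four neighbouring translates of $\mathcal{B}$, while $Hx_tx_{2t-1}$ and $Hx_tx_{2t-1}z$ do not; hence $A_H$ fixes $Hz$, so $\{H,Hz\}$ is a sub-block, and the $\la z\ra$-orbits form an $A$-invariant partition. From there $N\trianglelefteq A$ and $A/N\le\Aut(\C(2t,t))=G_{2t,t}$ finish the job. The small cases $t\le 3$ (including the exceptional $\Gamma_2^-$ with index $9$) are dispatched by computer, as you suspected.
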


\begin{proof}
Let $E=E_t$ and let $H=H_t^+$ or $H_t^-$, so that $\Gamma=\Cay(G,H,a)$.
To prove part (i), note that $|G|  = |E_t||\D_{2t}|=2^{2t+1}4t=t2^{2t+3}$ and that $H \cong \langle x_0,\ldots, x_{t-1}\rangle \rtimes \langle b\rangle \cong C_2^t \rtimes \C_2$, implying that $|G_v|=|H| = 2^{t+1}$.
Therefore,  $|\V\Gamma|  = |G|/|H| =t2^{t+2}$, as claimed.

To prove part (ii), first observe that $\langle z \rangle \cong \C_2$ is contained in the centre of $G$, that $G/\langle z\rangle \cong G_{2t,t}$ and that the natural isomorphism between $G/\langle z \rangle $ and $G_{2t,t}$ maps the group $H$ bijectively onto the group $H_{2t,t} \le G_{2t,t}$ (defined in Lemma~\ref{lem:CrsCos}), and the element $a\in G$ from the definition of the graph $\Gamma=\Cos(G,H,a)$ to the element $a$ from the definition of the graph $\C(2t,t) = \Cos(G_{2t,t}, H_{2t,t},a)$. 

We now show that $z\not \in H^aH$. Suppose, by contradiction, that $z\in H^aH$. Every element of $H$ is of the form $eb^\epsilon$ for some $e\in \langle x_0,\ldots ,x_{t-1}\rangle\leq E$ and $\epsilon\in\{0,1\}$, hence $z$ can be written in the form $(eb^{\epsilon})^a(e'b^{\epsilon'})$. Since $E$ is normal in $G$, we have $z=c(b^{\epsilon})^ab^{\epsilon'}$, for some $c\in E$. Since $z\in E$, it follows that $(b^{\epsilon})^ab^{\epsilon'}=1$ and hence $\epsilon=\epsilon'=0$. It follows that $z$ can be written in the form $(x_0^{\epsilon_0}\ldots x_{t-1}^{\epsilon_{t-1}})^a(x_0^{\epsilon'_0}\ldots x_{t-1}^{\epsilon'_{t-1}})=(x_1^{\epsilon_0}\ldots x_{t}^{\epsilon_{t-1}})(x_0^{\epsilon'_0}\ldots x_{t-1}^{\epsilon'_{t-1}})$.
If $\epsilon_{t-1} = 0$, then the latter belongs to the elementary abelian group $\la x_0, \ldots, x_{t-1}\ra$, which does not contain $z$. Similarly, if $\epsilon'_0 = 0$, then the latter belongs to the elementary abelian group $\la x_1,\ldots,x_t\ra$, which does not contain $z$.
Hence we may assume that $\epsilon_{t-1} = \epsilon'_0=1$. Now, since $x_tx_0 = x_0x_tz$, it follows that 
\begin{eqnarray*}
z&=& x_1^{\epsilon_0}\cdots x_{t-1}^{\epsilon_{t-2}}x_tx_0x_1^{\epsilon'_1}\cdots x_{t-1}^{\epsilon'_{t-1}}\\
&=&x_1^{\epsilon_0}\cdots x_{t-1}^{\epsilon_{t-2}}x_0x_tzx_1^{\epsilon'_1}\cdots x_{t-1}^{\epsilon'_{t-1}}\\
&=&x_1^{\epsilon_0}\cdots x_{t-1}^{\epsilon_{t-2}}x_0x_1^{\epsilon'_1}\cdots x_{t-1}^{\epsilon'_{t-1}}x_tz=d x_t z\\
\end{eqnarray*} with $d\in \la x_0, \ldots x_{t-1}\ra$. Therefore $x_t \in \la x_0, \ldots, x_{t-1}\ra$, which is
clearly a contradiction. Thus we have  shown that $z\not\in H^aH$.
It follows by  Lemma~\ref{lem:CosCov} that $\Gamma/\la z \ra \cong \C(2t,t)$ and that $\Gamma$ is a $\la z \ra$-cover of $\C(2t,t)$. Part (ii) of the theorem is thus proved.

Moreover, since the pair $(\C(2t,t), G_{2t,t})$ is locally-$\D_4$, so is the covering pair $(\Gamma, G)$, thus proving part (iii).

In the proof of parts (iv) and (v) we will need detailed information about the spheres of radius $2$ and $3$ around the vertex $H$. For $i\geq 1$ and $v\in \V\Gamma$, let $\Gamma_i(v)$
denote the set of vertices in $\Gamma$ at distance $i$ from $v$. To determine the neighbourhood $\Gamma_1(H)$, observe that $Ha, Ha^{-1}=Hab$, $Hx_{2t-1}a = Hax_0$ and $Hx_ta^{-1} = Ha^{-1}x_{t-1}=Habx_{t-1}$
are four pairwise distinct cosets of the form $Hah$ with $h\in H$. Since $\Gamma$ has valency $4$, this implies that
\begin{equation}
\label{GammaH}
\Gamma_1(H) = \{Hg : g \in X_1\},\> \hbox { where } X_1 = \{ a, x_{2t-1}a, a^{-1}, x_ta^{-1}\}.
\end{equation}   
Further, observe that 
\begin{equation}
\Gamma_2(H) =  \{ Hg : g\in X_2\} \setminus \{H\},\> \hbox{  where } X_2 = \{gh : g,h \in X_1\}.
\end{equation}
An easy computation shows that
\begin{eqnarray}
\label{Gamma2H}
 X_2 & = & \{x_{2t-2}^{\epsilon_1}x_{2t-1}^{\epsilon_2}a^2 : \epsilon_1,\epsilon_2 \in \{0,1\}\} \> \cup \\
 \nonumber    & & \{x_{t}^{\epsilon_1}x_{t+1}^{\epsilon_2}a^{-2} : \epsilon_1,\epsilon_2 \in \{0,1\}\} \> \cup \\
  \nonumber      &    & \{x_t,\, x_tz,\, x_{2t-1},\, x_{2t-1}z \}.
\end{eqnarray}
Similarly, note that 
\begin{equation}
\Gamma_3(H)  =  \{ Hg : g\in X_3\} \setminus \Gamma_1(H),\> \hbox{  where } X_3 = \{gh : g,h \in X_1, X_2\}.
\end{equation}
By a straightforward computation we get
\begin{eqnarray}
\label{Gamma3H}
 X_3 & = & \{x_{2t-3}^{\epsilon_1}x_{2t-2}^{\epsilon_2}x_{2t-1}^{\epsilon_3}a^3 : \epsilon_1,\epsilon_2,\epsilon_3 \in \{0,1\}\} \> \cup \\
 \nonumber  & & \{x_{t}^{\epsilon_1}x_{t+1}^{\epsilon_2}x_{t+2}^{\epsilon_3}a^{-3}: \epsilon_1,\epsilon_2,\epsilon_3 \in \{0,1\} \}\ \> \cup \\
\nonumber                        & & \{x_{t}^{\epsilon_1}x_{2t-1}^{\epsilon_2}z^{\epsilon_3}a: \epsilon_1,\epsilon_2,\epsilon_3 \in \{0,1\} \}  \> \cup  \\
 \nonumber                       &  & \{x_{2t-2}x_{2t-1}^{\epsilon_1}z^{\epsilon_2}a : \epsilon_1, \epsilon_2\in\{0,1\} \}  \> \cup  \\
\nonumber   & & \{x_{t}^{\epsilon_1}x_{t+1}^{\epsilon_2}z^{\epsilon_3}a^{-1}: \epsilon_1,\epsilon_2,\epsilon_3 \in \{0,1\} \}  \> \cup \\
 \nonumber                       &  & \{x_{t}^{\epsilon_1} x_{2t-1}z^{\epsilon_2}a : \epsilon_1, \epsilon_2\in\{0,1\} \}.
\end{eqnarray}

Having computed the second and the third neighbourhood of the vertex $H$, it is now easy to determine the girth of the graph $\Gamma$. Recall first that $\Gamma$ is a $2$-fold cover of the
graph $\C(2t,t)$, which is bipartite and of girth $4$. This implies that $\Gamma$ is also bipartite and of girth not exceeding $8$. 

Now, if $t=2$ and $\Gamma = \Gamma_t^+$, then the order of $a$ is $4$, and hence the elements $a^{-2}, a^2$, listed in \eqref{Gamma2H} coincide. In particular, $\Gamma$ contains the $4$-cycle $(H,Ha,Ha^2, Ha^{-1})$, and thus the girth of $\Gamma$ is $4$.

In all the other cases (that is, if $\Gamma=\Gamma_2^-$ or if $t\ge 3$), the $12$ elements of $X_2$ listed in \eqref{Gamma2H} are a transversal of $12$ pairwise distinct $H$-cosets. This implies that the girth of $\Gamma$ is at least $6$. If $t=3$ and $\Gamma=\Gamma_t^+$, then $a^3=a^{-3}$ and therefore $\Gamma$ contains the $6$-cycle $(H,Ha,H^2,Ha^3, Ha^{-2}, Ha^{-1})$.
In particular, the girth of $\Gamma_3^+$ is $6$.

In all the other cases (that is, if $\Gamma=\Gamma_3^-$ or if $t\ge 4$),
the $36$ elements of $X_3$, listed in \eqref{Gamma3H}, are pairwise distinct, and in fact are a transversal of 36 pairwise
distinct $H$-cosets. The girth of $\Gamma$ is thus at least (and therefore exactly) $8$.
This proves part (iv) of the theorem.

Let us now prove part (v). The automorphism groups of $\Gamma_t^\pm$ for $t\le 3$ 
can be checked easily with \texttt{Magma} \cite{magma}. We will therefore assume that $t\ge 4$ and
set $\Gamma=\Gamma_t^\pm$ and $A=\Aut(\Gamma)$. 

We will first show that the orbits of $\langle z\rangle$ on $\V\Gamma$ form a system of imprimitivity for the action of $A$ on $\V\Gamma$. Set
\begin{equation}
\label{eq:Bdef}
 \mathcal{C} =  \bigcap_{u\in \Gamma(H)} \Gamma_3(u)\> =\> \Gamma_3(H)a\> \cap \> \Gamma_3(H)x_{2t-1}a  \> \cap  \>   \Gamma_3(H)a^{-1}  \> \cap  \> \Gamma_3(H)x_ta^{-1}
\end{equation}
and set $\mathcal{B}=\mathcal{C}\cup \{H\}$.
Using \eqref{Gamma3H}, a straightforward calculation shows that
\begin{equation}
\label{eq:B}
\mathcal{B}= \{H, Hx_tx_{2t-1},Hz,Hx_{t}x_{2t-1}z\}.
\end{equation}

 We claim that $\mathcal{B}$ is a block of imprimitivity for $A$. 
 Observe first that the
 setwise stabiliser $A_\mathcal{B}$ of the set $\mathcal{B}$ in $A$ acts transitively on $\mathcal{B}$ and contains the vertex stabiliser $A_H$.
 The latter follows directly from the definition of the set $\mathcal{B}$, while the former follows from the observation that
  the group $\langle x_tx_{2t-1},z\rangle$ preserves $\mathcal{B}$ and acts transitively upon it. In particular, $A_\mathcal{B} = A_H\langle x_tx_{2t-1},z\rangle$.
  Hence $\mathcal{B}$ is an orbit of a subgroup of $A$ which strictly contains the vertex-stabiliser $A_H$.  This shows that $\mathcal{B}$ is  a block of imprimitivity for $A$. 
  
  Now observe that each of the vertices $H$ and $Hz$ has a neighbour in each of the four translates $\mathcal{B}a$, $\mathcal{B}a^{-1}$, $\mathcal{B}x_{2t-1}a$, $\mathcal{B}x_ta^{-1}$ of the block $\mathcal{B}$ (this is obviously true for $H$, and follows
  easily for $Hz$ from the fact that $z$ is contained in the centre of $G$). On the other hand, a direct inspection shows that $Hx_{t}x_{2t-1}$ and $Hx_{t}x_{2t-1}z$ have no neighbours in these four translates of $\mathcal{B}$.
  In particular, the stabiliser $A_H$ cannot map the vertex $Hz$ to any of the other two vertices $Hx_{t}x_{2t-1}$ and $Hx_{t}x_{2t-1}z$ in the block $\mathcal{B}$. In particular, $A_H$ fixes the vertex $Hz$, and hence
  $A_H = A_{Hz}$.
  It follows that the group $\langle A_H, z \rangle$ preserves the set $\{ H, Hz\}$, acts upon it transitively, and contains the vertex-stabiliser $A_H$. This implies that its orbit $\{H, Hz\}$ is a block of imprimitivity for $A$, as claimed.
  
Now consider the kernel $K$ of the action of $A$ on the $\langle z \rangle$-orbits. Since $\Gamma \to \Gamma/\langle z \rangle$ is a covering projection, we know that $K$ acts semiregularly on $\V\Gamma$.
On the other hand $\langle z \rangle \le K$ has the same orbits on $\V\Gamma$ as $K$, and hence $K=K_v\langle z \rangle = \langle z \rangle$. In particular, $\langle z \rangle$ is normal in $A$. Therefore $A/\langle z \rangle \le \Aut(\Gamma/\langle z \rangle) \cong \Aut(\C(2t,t)) \cong G_{2t,t}$ (see Lemma~\ref{lem:AutCrs}). In particular, $|A| = 2 |G_{2t,t}| = |G|$, and therefore $A=G$.
\end{proof}

\begin{remark}\label{rk:1}{\rm
Since $G_t^+$ and $G_t^-$ are non-isomorphic groups, it follows  from Theorem~\ref{lemma:construction} that  $\Aut(\Gamma_t^+)\not\cong \Aut(\Gamma_t^-)$. In particular,
we obtain that $\Gamma_t^+$ and $\Gamma_t^-$ are non-isomorphic graphs. Moreover, since the girth of the graphs $\C(r,s)$ is $4$, none of the graphs $\Gamma_t^\pm$, other than possibly $\Gamma_2^+$,
is isomorphic to any of the graphs $\C(r,s)$. On the other hand, it can be easily checked that $\Gamma_2^+ \cong \C(4,3)$.
}
\end{remark}

\section{A family of $\Sym(n)$-arc-transitive graphs}\label{se:as}

In this section, we introduce another interesting family of tetravalent $G$-arc-transitive graphs with arbitrarily large vertex-stabilisers.
Unlike the graphs $\Gamma_t^\pm$, which have soluble automorphism groups, the graphs we are going to describe here have an almost simple arc-transitive group of automorphisms.

Let $m\geq 2$ be an integer and let $G$ be the symmetric group $\Sym(4m)$ acting on the set $\{1,2,\ldots, 4m\}$.
Define the following permutations of $G$
\begin{eqnarray*}
x_i & = &(2i-1,2i)\qquad \textrm{ for } 1\leq i\leq 2m-1, \\
h  &= & (4m-1,4m) \prod_{i=1}^{m-1} (2i-1,4m-2i-1)(2i,4m-2i)\\
a & = & (4m-2,4m) \prod_{i=1}^{m-1} (2i-1,4m-2i-3)(2i,4m-2i-2)\\
g  & = & (1,3,5,\ldots,4m-3)(2,4,6,\ldots,4m-4,4m-2,4m-1,4m).
\end{eqnarray*}

We can now define the graphs $\Delta_m$. 

\begin{definition}
\label{def:D}
For $G=\Sym(4m)$ and $H=\langle x_1, x_2, \ldots, x_{2m-1}, h \rangle$, let $\Delta_m = \Cos(G,H,a)$. 
\end{definition}

Before proving that the graphs $\Delta_m$ are indeed connected tetravalent graphs, we first observe that the following holds: 
\begin{equation}
\label{eq:9}
\begin{array}{lclc}
g & = & ah,\\
x_i^{h}  &=&x_{2m-i}           & \textrm{ for }1\leq i\leq 2m-1,\\
x_i^g    &=&x_{i+1}   &\textrm{ for } 1\leq i\leq 2m-2,\\
x_{2m-1}^g&=&(4m-3,4m-2)^g=(1,4m-1).& \\
\end{array}
\end{equation}
In particular, the group $H$ is isomorphic to a semidirect product $\C_2^{2m-1} \rtimes \C_2$ and has order $2^{2m}$.

\begin{theorem}
\label{lemma:sym}
For any $m\ge 2$, the graph $\Delta_m$ is  non-bipartite, connected, tetravalent and $G$-arc-transitive. Moreover,  $|\V\Delta_m|=(4m)!/2^{2m}$ and $|G_v| =2^{2m}$.
\end{theorem}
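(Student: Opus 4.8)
The plan is to verify the hypotheses of the converse direction of Lemma~\ref{lem:cos} for the triple $(G,H,a) = (\Sym(4m), \langle x_1,\ldots,x_{2m-1},h\rangle, a)$. That lemma requires three things: that $H$ is core-free in $G$, that $G = \langle H, a\rangle$, and that $a^{-1} \in HaH$. Once these are established, the lemma immediately yields that $\Delta_m = \Cos(G,H,a)$ is connected and $G$-arc-transitive, and it also gives the valency as $|HaH|/|H|$, which I will argue equals $4$. The order computations then follow trivially from $|G| = (4m)!$ and $|H| = 2^{2m}$ (the latter already noted in the excerpt), giving $|\V\Delta_m| = |G|/|H| = (4m)!/2^{2m}$ and $|G_v| = |H| = 2^{2m}$.

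First I would establish connectivity, i.e.\ $G = \langle H, a\rangle$. The key tool is the relation $g = ah$ from \eqref{eq:9}, so $g \in \langle H, a\rangle$. Since $H$ contains the transpositions $x_1,\ldots,x_{2m-1}$ and $g$ conjugates these around according to the rules $x_i^g = x_{i+1}$ and $x_{2m-1}^g = (1,4m-1)$, I can use $\langle H, g\rangle$ to generate a large supply of transpositions. The plan is to show these transpositions connect up all of $\{1,\ldots,4m\}$ into a single orbit and generate enough of $\Sym(4m)$; concretely, one checks that the transpositions obtained generate a transitive subgroup containing a transposition, and since a transitive group generated by transpositions whose ``transposition graph'' is connected is the full symmetric group, we get $\langle H, a\rangle = \Sym(4m)$.

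Next, core-freeness: since $\Sym(4m)$ is almost simple with socle $\Alt(4m)$ for $4m \ge 5$, its only nontrivial normal subgroups are $\Alt(4m)$ and $\Sym(4m)$ itself. As $H$ has order $2^{2m}$, it contains neither, so the core of $H$ in $G$ is trivial; this step is essentially immediate. For the self-paired condition $a^{-1} \in HaH$, I would exploit that $h \in H$ is an involution and compute that $hah$ (or a similar conjugate) equals $a^{-1}$, mirroring the argument in Section~\ref{Gamma} where $b \in H$ and $HaH = HbabH = Ha^{-1}H$; the explicit permutation $h$ should be designed precisely so that $a^h = a^{-1}$ or some relation forcing $a^{-1} \in HaH$, which I would verify by direct inspection of the cycle structures.

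The main obstacle I anticipate is twofold. The genuinely delicate computational point is confirming that the valency is exactly $4$, i.e.\ that $|HaH|/|H| = 4$; this amounts to showing $|H \cap H^a| = |H|/4 = 2^{2m-2}$, which requires understanding exactly which of the generators $x_i, h$ survive conjugation by $a$ into $H$, and is where careful bookkeeping with the explicit permutations is unavoidable. The other nontrivial piece is the non-bipartiteness claim, which is not part of Lemma~\ref{lem:cos}: for this I would exhibit a closed walk of odd length in $\Delta_m$, equivalently find an odd-length product of elements of the coset double-coset structure returning to $H$; concretely, since $\Gamma$ is $G$-arc-transitive, $\Gamma$ is bipartite if and only if the subgroup $\langle H^a \mid \text{conjugates}\rangle$ generated appropriately has index $2$ in $G$, so non-bipartiteness follows once we observe $G = \Sym(4m)$ has no index-$2$ subgroup containing $H$ with the requisite property. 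I expect verifying the exact valency to be the real work, while the remaining hypotheses and the counting are routine.
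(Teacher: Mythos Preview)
Your plan is correct and follows the same skeleton as the paper's proof: verify the three hypotheses of Lemma~\ref{lem:cos}, then read off valency, vertex count, and non-bipartiteness. A few places where the paper streamlines what you anticipate as work: (i) $a$ is visibly a product of disjoint transpositions, hence an involution, so $a^{-1}=a\in HaH$ is immediate and no computation of $a^h$ is needed. (ii) For the valency, rather than pinning down $|H\cap H^a|$ exactly, the paper only checks $x_i^a=x_{2m-1-i}$ for $1\le i\le 2m-2$, giving $\langle x_1,\dots,x_{2m-2}\rangle\le H\cap H^a$ and hence valency dividing $4$; the possibilities $1$ and $2$ are then excluded because an almost simple group cannot act faithfully and arc-transitively on an edge or a cycle. (iii) For non-bipartiteness the paper observes that $H\nleq\Alt(4m)$ (e.g.\ $x_1$ is odd), so $\Alt(4m)$ is already vertex-transitive; since $\Alt(4m)$ is the unique index-$2$ subgroup of $G$, no bipartition can be $G$-invariant. (iv) For $\langle H,a\rangle=G$, instead of tracing a transposition graph, the paper notes that $g=ah$ has cycle lengths $2m-1$ and $2m+1$, which are coprime, so $\langle H,a\rangle$ is primitive and contains the transposition $x_1$, forcing it to be all of $\Sym(4m)$.
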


\begin{proof}
Let first prove that the triple $(G,H,a)$ from Definition~\ref{def:D} satisfies the conditions stated in Lemma~\ref{lem:cos}. In other words, let us prove that the core of $H$ in $G$ is trivial, that $G=\langle H,a\rangle$, and that $Ha^{-1}H = HaH$.
Since $a$ is an involution, the latter condition is automatically fulfilled. Furthermore, since the only nontrivial proper normal subgroup of $G=\Sym(4m)$ is the group $\Alt(4m)$
and since $H\not= \Alt(4m)$, it follows that the core of $H$ in $G$ is trivial. It remains to see that $G$ is generated by $H$ and $a$.

Set $K=\langle H,a\rangle$ and observe that from Equation~$(\ref{eq:9})$, $g\in K$ and hence $G$ is a transitive subgroup of $\Sym(4m)$. Furthermore, $g$ is a product of two cycles of lengths $2m-1$ and $2m+1$. Since $2m-1$ and $2m+1$ are coprime, it follows that $K$ is a primitive subgroup of $\Sym(4m)$. As $K$ contains the transposition $x_1=(1,2)$, we obtain from \cite[Theorem 3.3A(ii)]{DixMor} that $K=\Sym(4m)=G$. Lemma~\ref{lem:cos} now implies that $\Delta_m$ is a connected $G$-arc-transitive graph with $G_v \cong H$.

Finally, recall that $x_i^a=x_{2m-i-1}$ for $i=1,\ldots,2m-2$. In particular, $H\cap H^a\geq \langle x_1,\ldots,x_{2m-2}\rangle$ and $|H:H\cap H^a|$ divides $4$. Since $H\cap H^a$ is the stabiliser in $G$ of the arc $(H,Ha)$ and since $\Delta_m$ is $G$-arc-transitive, we obtain that the valency of $\Delta_m$ divides $4$. As $G$ is almost simple and acts faithfully on $\Delta_m$, we have that $\Delta_m$ has valency $4$. Since $H\nleq \Alt(4m)$, the group $\Alt(4m)$ is transitive on $\V\Delta_m$. As $\Alt(4m)$ is the only subgroup of index $2$ in $G$, we see that $\Delta_m$ is non-bipartite.
\end{proof}

Let $\Gamma=\Delta_m$ as in Definition~\ref{def:D} and let $G=\Sym(4m) \le \Aut(\Gamma)$. We consider the growth rate
 of the quantity $x=|G_v|=2^{2m}$ with respect to $y=|\V\Gamma| = (4m)!/2^{2m}$. Using Stirling's formula for the factorial term in $y$, one easily obtains that
\begin{equation*}
 y \approx \sqrt{4\pi\log(x)} \,x^{1-2\log(e)}\, \log(x)^{\log(x^2)},
\end{equation*}
with all the logarithms having base $2$. Since $\log(x)^{\log(x^2)} = x^{2\log(x)\log(\log(x))}$, this shows that  $x=|G_v|$ grows faster than any logarithmic function on $y=|V\Gamma|$ but slower than $|\V\Gamma|^c$ for any $c>0$. The graphs $\Delta_m$ are thus quite far from attaining the bound given in \eqref{bound}, yet we conjecture that a much faster growth of $|G_v|$ cannot be expected when $G$ is  almost simple.

\begin{conjecture}
For any positive constant $c$ there exists a finite family of graphs $\F_c$, such that the following holds:
if $G$ is an almost simple group and $\Gamma$ is a connected tetravalent $G$-arc-transitive graph such that $\Gamma$ is not contained in $\F_c$, then $|G_v|<|\V\Gamma|^c$.
\end{conjecture}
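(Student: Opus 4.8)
The plan is to translate the conjecture, via the identity $|\V\Gamma| = |G|/|G_v|$, into a purely group-theoretic statement. A short computation shows that $|G_v| < |\V\Gamma|^c$ is equivalent to $\log|G_v|/\log|G| < c/(1+c)$, so the conjecture is precisely the assertion that $\log|G_v| = o(\log|G|)$ as $G$ ranges over the almost simple groups admitting a tetravalent arc-transitive action, with only finitely many graphs lying above any fixed threshold. Note that, since almost simple groups are insoluble, none of these graphs is a $\C(r,s)$, so \cite[Corollary 3]{main} applies; but that bound yields only \emph{sublinear} growth of $|G_v|$ and is far too weak here, so the subpolynomial decay must be extracted from the almost simple structure itself. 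The first reduction is local: writing $L = G_v^{\Gamma(v)} \le \Sym(4)$ we have $|G_v : G_v^{[1]}| = |L| \le 24$, and by the Thompson--Wielandt theorem the stabiliser $G_{uv}^{[1]}$ of the ball of radius one about an arc $(u,v)$ is a $p$-group with $p \in \{2,3\}$; comparing the two local actions gives $|G_v^{[1]} : G_{uv}^{[1]}| \le 6$, so $G_{uv}^{[1]}$ has index at most $144$ in $G_v$. Hence, up to a bounded factor, $|G_v|$ is the order of a $\{2,3\}$-subgroup of $G$, and the question becomes: how large a $2$- or $3$-subgroup of an almost simple group can arise as (essentially) a tetravalent vertex-stabiliser?

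First I would dispose of the easy regimes by a case division on the socle $T$ of $G$, using the Classification of Finite Simple Groups. If $T$ is sporadic there are only finitely many candidates $(G,G_v)$, all absorbed into $\F_c$. If $T = \Alt(n)$, then every $p$-subgroup $P$ satisfies $\log_2|P| = O(n)$ while $\log_2|G| = \Theta(n\log n)$, so $\log|G_v|/\log|G| = O(1/\log n) \to 0$; this regime carries the genuinely infinite families, such as the graphs $\Delta_m$ constructed above, and for them the conjectured subpolynomial growth holds automatically. The crux is therefore the case in which $T$ is of Lie type.

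When $T$ is of Lie type over $\mathbb{F}_q$ in characteristic coprime to $p$, the $p$-part of $|T|$ is controlled by the $p$-parts of the cyclotomic values $\Phi_d(q)$: for unbounded rank this is already subpolynomial, while for bounded rank one argues that a $p$-subgroup large enough to threaten the bound cannot carry the rigid amalgam structure forced by a degree-$4$ local action, except for finitely many $q$. The genuinely hard case is defining characteristic, where the Sylow $p$-subgroup of $T$ has order about $|T|^{1/2}$, so that a large vertex-stabiliser would \emph{a priori} produce polynomial rather than subpolynomial growth. Here the plan is to invoke the Borel--Tits theorem: since $G_v$ contains a normal $p$-subgroup of bounded index with $p$ the defining characteristic, $G_v$ lies in a proper parabolic subgroup of $G$. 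The decisive point is again local: arc-transitivity of valency $4$ forces the rank-one residue attached to the flag $(G_v, G_{uv})$ to consist of exactly four objects, which pins the field parameter (typically $q+1 = 4$, i.e. $q = 3$, or $q = 2$), so that each Lie type contributes only finitely many graphs. These are absorbed into $\F_c$, while in every surviving configuration $|G_v|$ is forced to be subpolynomial in $|G|$.

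The main obstacle I anticipate is precisely this defining-characteristic analysis: turning the soft statement ``the residue has size $4$'' into a rigorous bound requires a complete understanding of which parabolic subgroups of which Lie type groups can serve as tetravalent vertex-stabilisers, and of the structure of $G_v^{[1]}$ inside them. This rests on the amalgam method for tetravalent arc-transitive graphs, where the classification of the admissible vertex-stabiliser shapes is still delicate, and on controlling the interaction between that local classification and the global subgroup structure supplied by the Classification. I expect the alternating and sporadic cases to be routine, the cross-characteristic case to be a careful but standard cyclotomic estimate, and the defining-characteristic case to carry essentially all the difficulty of the conjecture.
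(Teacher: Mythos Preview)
The statement you are addressing is presented in the paper as a \emph{conjecture}, not a theorem: the authors offer no proof, and indeed the purpose of constructing the family $\Delta_m$ in Section~\ref{se:as} is to give evidence that subpolynomial growth of $|G_v|$ is the most one can expect when $G$ is almost simple. There is therefore no proof in the paper against which your attempt can be compared.

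As a strategy your outline is sensible --- the reformulation $\log|G_v|/\log|G| < c/(1+c)$ is correct, the Thompson--Wielandt reduction to a $\{2,3\}$-subgroup of bounded index is standard, and a CFSG case split is the natural route --- but, as you yourself acknowledge, it is not a proof. The defining-characteristic Lie-type case is exactly where the conjecture has content, since there the Sylow $p$-subgroup already has order about $|G|^{1/2}$; your appeal to Borel--Tits together with ``the rank-one residue has size $4$'' is a hope rather than an argument. You have not established that this local constraint forces the field size $q$ to be bounded across all Lie types and all admissible amalgams, nor that the surviving configurations are finite in number rather than merely thin. Until that case is carried through, what you have written is an outline of where the difficulty lies, which is entirely consistent with the statement's status in the paper as an open problem.
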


\thebibliography{15}

\bibitem{magma} W.~Bosma, J.~Cannon and C.~Playoust, The Magma algebra system. I. The user language, \textit{J. Symbolic Comput.}, \textbf{24}(3-4), (1997), 235--265. 

\bibitem{CW} M.~D.~E. Conder and C.\ G.\ Walker, Vertex-Transitive Non-Cayley Graphs with Arbitrarily Large Vertex-Stabilizer, {\em J.\  Alg.\ Combin.} {\bf 8} (1998), 29--38.

\bibitem{DixMor}J.~D.~Dixon and B.~Mortimer, Permutation Groups, Springer-Verlag, New York, (1996).

\bibitem{main} P.~Poto\v{c}nik, P.~Spiga and G.~Verret,  \textit{Bounding the order of the vertex-stabiliser in $3$-valent vertex-transitive and $4$-valent arc-transitive graphs}, submitted.

\bibitem{PraegerXu} C.~E.~Praeger and M.~Y.~Xu, A Characterization of a Class of Symmetric Graphs of Twice Prime Valency, \textit{Europ.\ J.\ Combin.} \textbf{10} (1989), 91--102.

\bibitem{Sabidussi} G.~Sabidussi, Vertex-transitive graphs, \textit{Monatshefte Math.} \textbf{68} (1961), 426--438.

\bibitem{Tutte} W.~T.~Tutte, A family of cubical graphs, \textit{Proc. Camb. Phil. Soc} \textbf{43} (1947), 459--474.

\bibitem{Tutte2} W.~T.~Tutte, On the symmetry of cubic graphs, \textit{Canad. J. Math.} \textbf{11} (1959), 621--624.

\end{document}